\documentclass[11pt]{article}

\usepackage[a4paper,margin=2.7cm]{geometry}
\usepackage{amsmath,amssymb,amsthm}
\usepackage{mathtools}
\usepackage{tikz}
\usepackage{booktabs}
\usepackage{hyperref}
\usepackage{xcolor}
\usepackage{enumitem}

\newtheorem{theorem}{Theorem}
\newtheorem{lemma}[theorem]{Lemma}
\newtheorem{conjecture}[theorem]{Conjecture}
\newtheorem{corollary}[theorem]{Corollary}
\newtheorem{problem}[theorem]{Problem}
\newtheorem{question}[theorem]{Question}

\newtheorem{observation}[theorem]{Observation}
\theoremstyle{definition}

\theoremstyle{remark}

\title{ Disproving the Petersen Coloring Conjecture: Theoretical Analysis and an Infinite Family of Counterexamples}
\author{{\sc J.\ Goedgebeur}$^{1,2}$,
{\sc J.\ Jooken}$^{3,1}$, {\sc E.\ M\'a\v cajov\'a}$^{4}$,\\ {\sc  D.\ Mattiolo}$^{1}$, {\sc G.\ Mazzuoccolo}$^{5}$, {\sc N.\ Ulyanov}$^{6}$ \\[3mm]
\small $^{1}$ Department of Computer Science,\\ \small KU Leuven Kulak, 8500 Kortrijk, Belgium.\\[2mm]
\small $^{2}$ Department of Mathematics, Computer Science and Statistics,\\ \small Ghent University,
9000 Ghent, Belgium.\\[2mm]
\small $^{3}$ Mathematical Institute, Leiden University,\\ \small Einsteinweg 55, 2333 CC Leiden, The Netherlands.\\[2mm]
\small $^{4}$ Department of Computer Science, 
Faculty of Mathematics, Physics and Informatics,\\ \small Comenius University, 84248 Bratislava, Slovak Republic
\\[2mm]
\small $^{5}$ Department of Physics, Informatics and Mathematics,\\ \small University of Modena and Reggio Emilia, 41125 Modena, Italy.\\[2mm]
\small $^{6}$ Independent Researcher, Berlin, Germany.
}

\date{}

\begin{document}
\maketitle

\begin{abstract}
\noindent 
In 1988, Jaeger conjectured that every bridgeless cubic graph $G$ admits a Petersen coloring; that is, a map $E(G) \to E(P)$ mapping any two adjacent edges of $G$ to two adjacent edges of the Petersen graph $P$.  A positive resolution to Jaeger’s conjecture would have immediately resolved several other famous and
long-standing problems in graph theory. 
In July 2026, a 68-vertex counterexample was announced on X. Shortly afterwards, Putman independently presented two non-isomorphic 112-vertex counterexamples, relying solely on computer-assisted verification. In this paper, we present two counterexamples of order $52$, currently the smallest known, and provide a purely theoretical proof. In the second part, we construct an infinite family of cyclically $4$-edge-connected cubic graphs without a Petersen coloring for every even order at least $60$. Additionally, through computational verification, we show that any counterexample must have order at least $40$. Moreover, we also show that our counterexamples provide a negative answer to other related problems. Finally, we conclude the paper by discussing key open problems and highlighting avenues for future work.

\end{abstract}

\section{Introduction}

A \emph{Petersen coloring} (or $P$-\emph{coloring}) of a cubic graph $G$ is a map $\varphi \colon E(G) \to E(P)$ mapping any two adjacent edges of $G$ to two adjacent edges of the Petersen graph $P$. In 1988, Jaeger formulated the following celebrated conjecture.

\begin{conjecture}[Petersen Coloring Conjecture~\cite{J88}]\label{PCC}
    Every bridgeless cubic graph admits a $P$-coloring.
\end{conjecture}

The Petersen Coloring Conjecture has stood for nearly four decades as one of the key benchmarks of structural graph theory. The central importance assigned to Conjecture~\ref{PCC} over the last 40 years stems largely from its remarkable strength and unifying nature: a positive resolution to Jaeger's conjecture would have immediately resolved several other famous and long-standing problems in graph theory. Most notably, it is well known~\cite{J88} that it implies both the \emph{Berge--Fulkerson Conjecture}~\cite{F71} (which asserts that every bridgeless cubic graph contains six perfect matchings covering every edge exactly twice) and the \emph{5-Cycle Double Cover Conjecture}~\cite{C85,P82} (stating that every bridgeless graph admits a cycle double cover consisting of at most five even subgraphs). As a result, the conjecture has served as a benchmark for understanding the edge-structure and cycle spaces of snarks and regular graphs.

A counterexample for Conjecture~\ref{PCC} on $68$ vertices, credited to GPT-5.6 Sol Ultra, was posted in July 2026 on X by the account \texttt{@NeuralReformist}\footnote{\url{https://x.com/NeuralReformist/status/2080369979388805555}}. Moreover, in August 2026, with the use of an AI model, Putman~\cite{P26,P26b} provided two nonisomorphic counterexamples on $112$ vertices. However, his verification is strictly computational. 
In contrast, in Section~\ref{section:52} of this paper we present two counterexamples of order $52$ and provide a complete, purely theoretical proof that they do not admit a Petersen coloring.\footnote{We remark that the second author of the present paper has also recently provided a human-checkable proof for one of Putman's $112$-vertex counterexamples in an arXiv note~\cite{Jooken2026}.} Beyond reducing the order of a counterexample, providing a theoretical proof allows us to gain a deeper structural understanding of the reasons why the conjecture fails. Crucially, this opens up clear directions for future research to investigate which of its famous consequences, such as the Berge-Fulkerson Conjecture or the 5-Cycle Double Cover Conjecture, might likewise be false or, conversely, remain valid.
In this regard, we computationally verified that our two graphs of order $52$, the graph of order $68$, and the two graphs of order $112$ are not counterexamples to either the Berge–Fulkerson Conjecture or the 5-Cycle Double Cover Conjecture.

The above-mentioned counterexamples are cyclically $4$-edge-connected, where a cubic graph is \emph{cyclically $k$-edge-connected} if no set of fewer than $k$ edges disconnects it into two components, each containing a cycle. Starting from an arbitrary $3$-edge-connected counterexample, the existence of an infinite family of $3$-edge-connected counterexamples follows directly by applying the standard $3$-sum operation. Therefore, a natural question is whether there exist infinitely many cubic graphs without a Petersen coloring with cyclic edge-connectivity at least $4$. 
In Section~\ref{sec:infinite}, we construct a family of cyclically $4$-edge-connected cubic graphs without a Petersen coloring for every even order of at least $60$, while leaving the existence of a cyclically $5$-edge-connected counterexample as an open problem. 

Finally, in Section~\ref{sec:openProblems}, by extending previous exhaustive computational searches that had verified the validity of Conjecture~\ref{PCC} up to $36$ vertices, we push this computational search bound further by testing all candidate graphs on $38$ vertices. 
% \jan{``generating and testing'' could bit a bit misleading / delicate as actually Gunnar and Steven generated the graphs (in fact I regenerated them myself using their program, but ok...). Maybe just ``testing''? Or simply: ``Finally, extending previous exhaustive computational searches that had verified the validity of Conjecture~\ref{PCC} up to $36$ vertices, we push this computational search bound further up to $38$ vertices (and refer to the final section for more details)''?} 
As a result, we show that any counterexample to Conjecture~\ref{PCC} must have order at least $40$, thus narrowing the gap for the order of a smallest counterexample to between $40$ and $52$. Moreover, we also show that our counterexamples provide a negative answer to the $P_{12}$-coloring Conjecture of Hakobyan and Mkrtchyan~\cite{HM}, and to a question of Mkrtchyan, the fourth and fifth authors~\cite{MMM}. We conclude the paper by pointing out key open questions for future work.

\section{Two relevant multipoles}

A \emph{multipole} $M$ is a structure consisting of a vertex set $V(M)$ and an edge set $E(M)$, where every edge has two ends, each of which is either incident to a vertex in $V(M)$ or left unattached. An end of an edge that is not incident to any vertex is called a \emph{free end}. An edge with exactly one free end and one end incident to a vertex is referred to as a \emph{dangling edge}. A multipole possessing exactly $k$ free ends is called a \emph{$k$-pole}. The set of dangling edges in a multipole, say $d_1,d_2,\ldots, d_k$, is endowed with a linear order which can be specified as $M(d_1,d_2,\ldots, d_k)$. If the order of the dangling edges is not important, we simply write $M$. A $k$-pole is said to be \emph{cubic} if every vertex in $V(M)$ has degree exactly $3$ (counting both edges and dangling edges incident to it).

A $P$-coloring of a cubic multipole $M$ is a map $\varphi\colon E(M)\to E(P)$ such that for every vertex $v$ of $M$, there is a vertex $w_v$ of $P$ satisfying $\varphi(\partial_M(v))=\partial_P(w_v)$, where $\partial_G(u)$ denotes the set of edges and dangling edges incident to the vertex $u$ in $G$.
In the following, we construct cubic graphs by combining specific multipoles while controlling the restrictions that any Petersen coloring induces on their dangling edges.

For a $k$-pole $M(d_1,\ldots,d_k)$, we define its \emph{$P$-coloring set} as 
\[
P\text{-Col}(M) = \{(\varphi(d_1),\varphi(d_2), \ldots,\varphi(d_k)) \colon \varphi \text{ is a Petersen coloring of } M\}.
\]

To combine multipoles, we use the following standard operation: given two dangling edges $d_1$ and $d_2$ belonging to disjoint multipoles and incident to vertices $u$ and $v$ respectively, to \emph{join} $d_1$ and $d_2$ means to remove both dangling edges and replace them with a single edge $uv$.

\subsection{The $4$-pole $W$}

Let $P$ denote the Petersen graph. We define the $4$-pole $W(i_1,i_2,o_1,o_2)$ (commonly referred to as the \emph{Petersen $4$-pole}) as the multipole obtained from $P$ by removing two adjacent vertices, say $u$ and $v$, along with the edge $uv$ connecting them. The resulting structure $W$ consists of $8$ vertices and $4$ dangling edges, which correspond to the edges of $P$ that were incident to $u$ and $v$ (excluding $uv$), see Figure~\ref{fig:petersen_labelled}. Since every vertex in $W$ retains degree $3$, $W$ is a cubic $4$-pole.
We denote the two dangling edges originally incident to $u$ by $i_1$ and $o_1$, and the two dangling edges originally incident to $v$ by $i_2$ and $o_2$.

\begin{figure}[h]
\centering
\includegraphics[width=8cm]{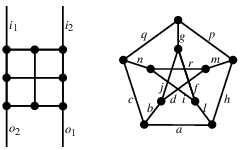}
\caption{Labelling of the edges of $W$ and $P$ as used in the proof of Lemma~\ref{lem:W_coloring} and Lemma~\ref{lem:5pole_configs}.}
\label{fig:petersen_labelled}
\end{figure}

%\gi{this labeling of the Petersen is quite crazy, we should change it for the journal version.} \ed{this labeling makes sense to me, I would keep it}
We first establish the following lemma about the $P$-coloring set of $W$.

\begin{figure}
    \centering
    \includegraphics[width=0.7\linewidth]{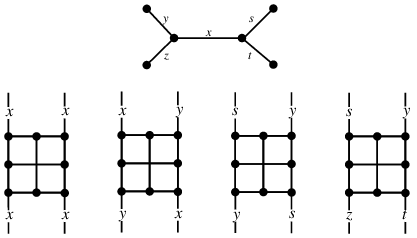}
    \caption{The four possibilities for a Petersen coloring of dangling edges of $W$.}
    \label{fig:W}
\end{figure}

%\gi{Here a new version of Figure 2 which is consistent with the order of the dangling edges in the statement of Lemma 2 and also with Figure 1 and with the description of the $5$-pole $F$}

\begin{lemma} \label{lem:W_coloring}

For any Petersen coloring of $W$, there exists an edge $x \in E(P)$, with $\{y,z\}$ and $\{s,t\}$ denoting the pairs of edges incident to its respective endpoints, such that the sequence of colors assigned to $(i_1, i_2, o_1, o_2)$ is exactly one of the following:
\begin{itemize}
    \item $(x,x,x,x)$;
    \item $(x,y,x,y)$;
    \item $(s,y,s,y)$;
    \item $(s,y,t,z)$.
\end{itemize}
\end{lemma}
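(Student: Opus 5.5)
Our plan is to work with the standard reformulation of Petersen colorings through perfect matchings. The Petersen graph has exactly six perfect matchings, and every edge of $P$ lies in exactly two of them. A map $\varphi\colon E(M)\to E(P)$ on a cubic multipole is a $P$-coloring if and only if, for each perfect matching $N$ of $P$, the preimage $\varphi^{-1}(N)$ meets every vertex of $M$ exactly once, i.e.\ it is a perfect matching of the multipole in which dangling edges are allowed to cover their endpoint. This equivalence is well known (Jaeger's argument, using that three edges of $P$ form the star of a vertex exactly when every perfect matching of $P$ meets them exactly once), and we will recall it briefly.

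\textbf{Parity step.} Apply a parity argument to each such preimage. $W$ has $8$ vertices, so in each of these six ``matchings'' an even number of the four dangling edges is used. Equivalently, for every perfect matching $N$ of $P$, the number of indices $j$ with $\varphi(d_j)\in N$ is even, where $(d_1,d_2,d_3,d_4)=(i_1,i_2,o_1,o_2)$. Write $c=(\varphi(i_1),\varphi(i_2),\varphi(o_1),\varphi(o_2))$.

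\textbf{Local step.} We also use that $i_1,o_1$ are the two edges of $P$ that were incident with $u$, and $i_2,o_2$ those incident with $v$. In the colored graph they share an endpoint only if they come from the same removed vertex. Hence the only local information on the colors is the parity condition, together with whatever the interior forces.

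\textbf{Enumeration step.} Enumerate all quadruples of edges of $P$ that satisfy the parity condition for all six perfect matchings. Since each edge lies in two of them, the condition says that the multiset of the four colors, viewed through the $6$-dimensional incidence vectors over $\mathbb{Z}_2$, sums to zero. Using the automorphism group of $P$ (which is transitive on edges and on $2$-arcs), we can classify such multisets. The options are: all four colors equal; two equal pairs; or four distinct edges whose perfect-matching vectors sum to zero. In $P$ the last case corresponds to configurations like $\{s,t,y,z\}$, namely the four edges adjacent to a common edge $x$. One checks that two edges sharing a vertex have vectors summing to that of the third edge at that vertex. With this, we can determine exactly which $4$-sets sum to zero.

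\textbf{Realizability step.} Decide which orderings of these multisets are actually realized by colorings of $W$. Here we use the structure of $W$, e.g.\ by extending the coloring back to $P$: adding two vertices $u,v$ with edge $uv$ shows that colorings of the form ``$i_1,o_1$ adjacent in $P$ and $i_2,o_2$ adjacent'' come from restrictions. For the other patterns ($x,x,x,x$ and $x,y,x,y$) we use that $i_1$ and $o_1$ may share a color. The forbidden orderings (for instance $i_1,i_2$ equal while $o_1,o_2$ are distinct) must be excluded by a short case analysis on the interior of $W$, propagating colors along the $5$-cycle structure.

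\textbf{Expected obstacle.} We expect this exclusion of the bad orderings, and of the cases where the pair structure is wrong, to be the main difficulty. To handle it, we would first try to exploit the symmetry of $W$ swapping $u\leftrightarrow v$ and $i\leftrightarrow o$ to reduce the cases, and then perform a direct local propagation. If that becomes unwieldy, we would fall back on an exhaustive but small computer-free check, using that colorings of $P$ itself are automorphisms or the known degenerate colorings.
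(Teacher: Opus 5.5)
There is a genuine gap. The only step you actually argue is the parity step, and it is much weaker than the lemma. The part that carries the real content, your ``short case analysis on the interior of $W$'', is announced but never carried out.

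To see how weak parity is, identify $E(P)$ with the $2$-subsets of the set of six perfect matchings; this works because any two perfect matchings of $P$ share exactly one edge. A multiset of four colors then satisfies your condition exactly in three cases:
\begin{itemize}
\item[(i)] it is $\{p,p,p,p\}$;
\item[(ii)] it is $\{p,p,q,q\}$ with \emph{arbitrary} $p\neq q$, including $\mathrm{dist}_P(p,q)=3$, which Corollary~\ref{cor:Wdistances} forbids;
\item[(iii)] it forms a $4$-cycle of $K_6$.
\end{itemize}
There are $45$ such $4$-cycles, and only $15$ of them are neighbourhoods $\{s,t,y,z\}$ of an edge $x$. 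For example, take $y,z$ at one end of $x$ and $p,q$ the two edges at distance $3$ from $x$. Then $\{y,z,p,q\}$ passes parity, yet $p$ and $q$ are at distance $3$ from each other.

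Parity is also invariant under permuting the four dangling edges. So it cannot distinguish $(x,y,x,y)$ from $(x,x,y,y)$ or $(x,y,y,x)$, nor $(s,y,t,z)$ from $(s,t,y,z)$. Your claim that ``the only local information on the colors is the parity condition'' is therefore false. Everything the lemma asserts lies in the step you leave open: excluding distance-$3$ pairs, the $30$ spurious $4$-sets, and every wrong ordering. That step is precisely the paper's proof. It fixes a $5$-cycle $C$ of $W$ and notes that, up to symmetries of $P$ and $C$, a $5$-cycle with its five pendant edges has only three $P$-colorings. It then propagates each of these through the remaining three vertices of $W$ to read off the possible dangling colors. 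Without that analysis, or some substitute, your proposal does not prove the lemma.

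Two further points need correcting.
\begin{itemize}
\item Your claimed equivalence fails in the ``if'' direction. If $p,q$ are the two edges at distance $3$ from an edge $x$, then $\{x,p,q\}$ meets every perfect matching of $P$ exactly once, since the three pairs partition the $6$-set. But $\{x,p,q\}$ is not a star. Only the forward implication holds; it suffices for parity, but it shows the matching reformulation is a Berge--Fulkerson-type relaxation that loses information.
\item The identity ``two edges at a vertex sum to the third'' is false in $\mathbb{Z}_2^6$. It holds only modulo the all-ones vector.
\end{itemize}
Finally, your realizability step is not needed, because the lemma states only a necessary condition on colorings of $W$.
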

\begin{proof} %\gi{Here is an outline of the proof strategy I have in mind, drafted with the help of AI. If you agree with this approach, I can expand on the details for each case, though writing it all out will be quite tedious. Alternatively, we could keep this structure and simply state that the exhaustive case-checking was also verified by computer. Let me know what you prefer. } \jan{I agree that mentioning that the tedious exhaustive case-checking was done by computer is perfectly fine, though we should be a bit careful as in the introduction we said that our proof is purely theoretical, but I don't think this a real problem as this is just exhaustive case-checking.} \jorik{I would avoid saying 'computer-assisted' here. I think it can be done by hand; it is just tedious. I am not sure if the proof needs more details. It is a matter of taste. (For your information: the current lemma is almost precisely Lemma 2.1 from \url{https://arxiv.org/pdf/2608.10028}. I reduced Lemma 2.1 to 64 cases that each require a very small check.)}\ed{I agree with Jorik.}
 Fix one 5-cycle, denoted by $C$, in $W$. In any Petersen coloring of $W$, the restriction of the coloring to $C$ and its five incident edges must constitute a valid Petersen coloring of a 5-pole formed by a 5-cycle. It is easy to check that, up to automorphisms of the Petersen graph $P$ and the $5$-cycle, there are exactly three different ways to color such a 5-pole (See Figure~\ref{fig:5cycle}).

\begin{figure}
    \centering
    \includegraphics[width=0.9\linewidth]{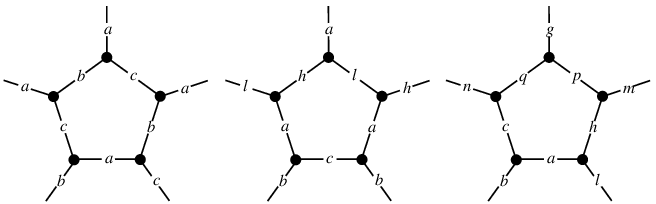}
    \caption{The only three possible Petersen colorings of a $5$-cycle $C$, up to symmetries of $P$ and $C$.}
    \label{fig:5cycle}
\end{figure}

    To find all possible colorings of $W$, we systematically apply these three colorings to the fixed 5-cycle $C$ in all possible ways up to symmetries and test whether the partial coloring can be extended to the remaining edges and the four dangling edges of $W$.

    \begin{itemize}
        \item \textbf{Case 1:} Among all possible rotations of the left coloring in Figure~\ref{fig:5cycle} of $C$, those that successfully extend to the remaining edges yield exactly the sequences $(x, x, x, x)$ and $(x, y, x, y)$ for the dangling edges.
        
        %When applying the first coloring to $C$, depending on all possible rotations of the colors around $C$, it successfully extends to the remaining edges. These valid extensions yield exactly the sequences $(x, x, x, x)$ and $(x, y, x, y)$ for the dangling edges.
        
        \item \textbf{Case 2:} When testing the middle coloring on $C$, propagating the forced colors to the remaining vertices of $W$ leads to a conflict in two possible alignments. In these non-extendable alignments, local constraints force two adjacent edges in the uncolored part of $W$ to receive the same color or receive colors that are not adjacent in $P$. However, in exactly one case, the coloring successfully extends, uniquely producing the sequence $(s, y, s, y)$.
        
        \item \textbf{Case 3:} Finally, we consider the right coloring in Figure~\ref{fig:5cycle}, which corresponds to an injective mapping of the 5-cycle into $P$. The extension to the remaining vertices of $W$ is uniquely determined by the adjacencies in $P$. This configuration successfully extends to $W$ and produces the remaining sequence for the dangling edges: $(s, y, t, z)$.
    \end{itemize}

     Therefore, the sequence of colors assigned to $(i_1, i_2, o_1, o_2)$ is exactly one of the four configurations listed in the statement.
\end{proof}

Before stating the next result, we formally recall the concept of distance between edges. For any two edges $e, f \in E(P)$, the distance $\text{dist}_P(e, f)$ is defined as the shortest path distance between their corresponding vertices in the line graph of $P$. 
% Equivalently, $\text{dist}_P(e, f) = 0$ if $e = f$, $\text{dist}_P(e, f) = 1$ if they are adjacent, and in general $\text{dist}_P(e, f) = k$ if the minimal number of intermediate edges connecting an endpoint of $e$ to an endpoint of $f$ is $k-1$.

\begin{corollary}\label{cor:Wdistances}
Let e and f be two dangling edges of $W$ and let $\varphi$ be a Petersen coloring of $W$. Then the distance in the Petersen graph $P$ between the colors assigned to $e$ and $f$ satisfies $\text{dist}_P(\varphi(e), \varphi(f)) \le 2$.\end{corollary}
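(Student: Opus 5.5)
This follows directly from Lemma~\ref{lem:W_coloring}: every Petersen coloring of $W$ gives the dangling tuple $(i_1,i_2,o_1,o_2)$ one of the four listed forms. So it suffices to check, for each form and each of the six pairs of dangling edges, that the two colors are at distance at most $2$ in the line graph of $P$. Since the lemma states the four forms with the order of the dangling edges fixed, we do not need to invoke any symmetry of $W$. We only have to inspect all unordered pairs of positions in each tuple.

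First we record the distances among $x,y,z,s,t$. Let $x=ab$, with $y,z$ the other two edges at $a$ and $s,t$ the other two edges at $b$. Then:
\begin{itemize}
\item $x$ is adjacent to each of $y,z,s,t$, so these distances are $1$;
\item $y$ and $z$ share the vertex $a$, and $s$ and $t$ share the vertex $b$, so these distances are $1$;
\item each of $y,z$ and each of $s,t$ are distinct edges that are not adjacent, since $P$ has girth $5$ and so contains no triangle; but $y \to x \to s$ is a path of length $2$ in the line graph, so these distances are exactly $2$.
\end{itemize}
Hence any two edges in $\{x,y,z,s,t\}$ are at distance at most $2$.

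In each of the four forms every color used belongs to $\{x,y,z,s,t\}$: the forms use $\{x\}$, $\{x,y\}$, $\{s,y\}$ and $\{s,t,y,z\}$ respectively. Repeated colors are at distance $0$, and all other pairs are covered by the list above. This gives $\mathrm{dist}_P(\varphi(e),\varphi(f))\le 2$ for every pair of dangling edges $e,f$.

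There is no real obstacle here. The only point that needs care is justifying that $y$ and $s$ (and the analogous pairs) are at distance at most $2$, which is exactly the path through $x$. The substantive content lies entirely in Lemma~\ref{lem:W_coloring}, which we take as given.
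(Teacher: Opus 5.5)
Your proof is correct and is essentially the paper's argument. The paper gives no separate proof and treats the corollary as an immediate consequence of Lemma~\ref{lem:W_coloring}; your observation that every color in the four forms lies in $\{x,y,z,s,t\}$, so any two are at distance at most $2$ via $x$, is exactly the intended reading.
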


\subsection{The $5$-pole $F$}\label{sec:5pole}

 Starting from the $4$-pole $W$ introduced in the previous section, we construct the $5$-pole $F$ which will play a crucial role in the construction of a counterexample to Conjecture~\ref{PCC}.

%Consider two copies of the 4-pole $W$, denoted as $W_{top}$ and $W_{bot}$. We refer to the free dangling edges $i_1$ and $i_2$ of each copy as its {\it top} dangling edges, and to $o_1$ and $o_2$ as its {\it bottom} dangling edges. The dangling edges of $W_{top}$ will be denoted $i_{1t},i_{2t},o_{1t},o_{2t}$ and the dangling edges of $W_{bot}$ will be denoted $i_{1b},i_{2b},o_{1b},o_{2b}$. Join the dangling edges $o_{2t}$ and $i_{1b}$,  and $o_{1t}$ and $i_{2b}$. The former connecting edge is left intact, and is denoted by $e_L$. The latter connecting edge is subdivided by a new vertex, resulting into an edge $e_{RT}$ and an  edge $e_{RB}$. Finally, a new dangling edge $e$ is attached at the subdivision vertex to obtain a cubic 5-pole. We denote this 17-vertex 5-pole by $F$.

%Join the bottom dangling edge $o_1$ of $W_{top}$ to the top dangling edge $i_2$ of $W_{bot}$, and the bottom dangling edge $o_2$ of $W_{top}$ to the top dangling edge $i_1$ of $W_{bot}$.
% Consider two copies of $W$. Join the dangling edge $o_1$ of the first copy to the dangling edge $i_2$ of the second copy, and the dangling edge $o_2$ of the first copy to the dangling edge $i_1$ of the second copy. 
%One of these two connecting edges is left intact, which we denote by $e_L$. The other connecting edge is subdivided by a new vertex, resulting into an edge $e_{RT}$ and an  edge $e_{RB}$, and a new dangling edge $e$ is attached at the subdivision vertex to obtain a cubic 5-pole. We denote this 17-vertex 5-pole by $F$  (see Figure~\ref{fig:config_example}).

\begin{figure}[h]
\centering
\includegraphics[width=10cm]{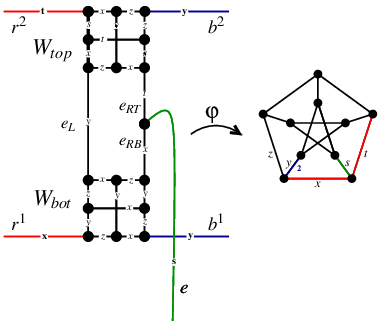}
\caption{The $5$-pole $F$ and an example of a Petersen coloring and the corresponding configuration in the Petersen graph.}
\label{fig:config_example}
\end{figure}

%The 5-pole $F$ has 5 dangling edges which receive different colors in Figure~\ref{fig:config_example} according to their position:

We construct $F$ by joining two copies of $W$, denoted $W_{\text{top}}$ and $W_{\text{bot}}$. We refer to  $i_1, i_2$ as the \emph{top} dangling edges of $W$ and $o_1, o_2$ as its \emph{bottom} dangling edges; for $W_{\text{top}}$ and $W_{\text{bot}}$, we denote these by $i_{1t}, i_{2t}, o_{1t}, o_{2t}$ and $i_{1b}, i_{2b}, o_{1b}, o_{2b}$, respectively. 
We connect $o_{2t}$ to $i_{1b}$ to form an edge $e_L$, and join $o_{1t}$ to $i_{2b}$, subdividing this second connection with a new vertex to attach a new dangling edge $e$ (yielding the edges $e_{\text{RT}}$ and $e_{\text{RB}}$).
Two dangling edges of $W_{\text{top}}$ and two dangling edges of $W_{\text{bot}}$ that remain unjoined are identified as dangling edges of $F$. Together with $e$, they form the five dangling edges of $F$ and are renamed according to their position and color as follows (see also Figure~\ref{fig:config_example}):

% \begin{itemize}
%     \item The dangling edges $i_{1t}$ and $o_{2b}$ will be called $r^1$ and $r^2$, respectively and we set $R = \{r^1, r^2\}$; these two edges will be called \emph{red};
%     \item The dangling edges $i_{2t}$ and $o_{1b}$ will be called $b^1$ and $b^2$, respectively and we set $B = \{b^1, b^2\}$; these two edges will be called \emph{blue};
%     \item The dangling edge $e$ will be called \emph{green}.
% \end{itemize}

\begin{itemize} 
\item The dangling edges $i_{1t}$ and $o_{2b}$  become dangling edges of $F$ called $r^1$ and $r^2$, respectively, and we set $R = \{r^1, r^2\}$; these two edges will be called \emph{red}; 
\item The dangling edges $i_{2t}$ and $o_{1b}$ become dangling edges of $F$ called $b^1$ and $b^2$, respectively, and we set $B = \{b^1, b^2\}$; these two edges will be called \emph{blue}; 
\item The newly attached dangling edge $e$ will be called \emph{green}. 
\end{itemize}
% \begin{itemize}
%     \item Two dangling edges denoted by $R = \{r^1, r^2\}$ (colored red);
%     \item Two dangling edges denoted by $B = \{b^1, b^2\}$ (colored blue);
%     \item One dangling edge $e$ (colored green).
% \end{itemize}

Let $\varphi$ be a Petersen coloring of the $5$-pole $F$. We define the \emph{configuration of $\varphi$} as the ordered partition of the multiset of edges $\{\varphi(r^1), \varphi(r^2), \varphi(b^1), \varphi(b^2), \varphi(e)\}$ into the tuple of multisets $(\varphi(R), \varphi(B), \{\varphi(e)\})$. The edges in these three multisets are called red, blue, and green, respectively.

It is worth highlighting the formal distinction between the configuration of a coloring $\varphi$ and the $P$-coloring set $P\text{-Col}(F)$. While $P\text{-Col}(F)$ is defined as a set of strictly ordered tuples $(\varphi(r^1), \varphi(r^2), \varphi(b^1), \varphi(b^2), \varphi(e))$, a configuration abstracts away certain structural symmetries by grouping these entries into multisets. Specifically, the set of all configurations can be formally understood as a quotient of $P\text{-Col}(F)$ under an equivalence relation, where two tuples yield the same configuration if they are the same up to switching the first two entries (permuting the red edges $r^1$ and $r^2$) or switching the third and fourth entries (permuting the blue edges $b^1$ and $b^2$).

%, or applying an automorphism of the Petersen graph\ed{this is little confusing, we want to use automorphism on the "colors". I was first looking for automorphism in $F$} \gi{I agree, here we do not need to mention automorphisms of the Petersen, sorry}. 

 Consider the Petersen coloring $\varphi$ of $F$ in Figure~\ref{fig:config_example}. Following the notation of the figure, we have that the configuration of $\varphi$ is the tuple of multisets $(\{x,t\},\{y,y\},\{s\})$.

 We say that two configurations are equivalent if there exists an automorphism of the Petersen graph that maps one to the other.

In what follows, the nine configurations $\mathcal{C}_1,\mathcal{C}_2,\ldots,\mathcal{C}_9$ depicted in Figure~\ref{fig:configurations} will play an important role.

For example, the configuration $(\{x,t\},\{y,y\},\{s\})$ in Figure~\ref{fig:config_example} is equivalent to Configuration 2 in the list in Figure~\ref{fig:configurations}.

\begin{figure}[htbp]
\centering
\includegraphics[width=8cm]{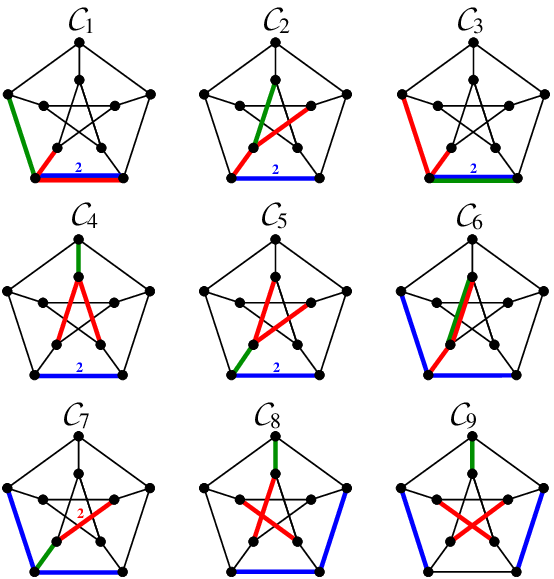}
\caption{The nine possible configurations of a Petersen coloring of the $5$-pole $F$.}
\label{fig:configurations}
\end{figure}

\begin{lemma}\label{lem:5pole_configs}
Up to equivalence, the only possible configurations that can occur in the coloring of $F$ are $\mathcal{C}_1,\mathcal{C}_2,\ldots,\mathcal{C}_9$.
%There are nine non-equivalent configurations of a Petersen coloring of the $5$-pole~$F$.
% Up to automorphisms of the Petersen graph $P$, there exist exactly nine distinct possible configurations of a Petersen coloring of the 5-pole $F$. \ed{...and exchanging 1st and 2nd edge and 3 vs 4 ... we should make order with this. What about calling two configuration equivalent if one can be get from the other by exanging 1 and 2, 3 and 4 and automorphism of PG? Then we can say that there is 9 nonequivalent configurations}
% \gi{I do not understand here. If I switch 1th and 2nd (or 3rd and 4th), then I obtain the same configuration by definition, not an equivalent one. I would state the lemma as follows "There exist exactly nine non equivalent configurations of a Petersen coloring of the 5-pole F. Ok?}
\end{lemma}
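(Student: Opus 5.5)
The plan is to reduce everything to Lemma~\ref{lem:W_coloring}, applied separately to $W_{\text{top}}$ and $W_{\text{bot}}$. We glue the two resulting four-type patterns along the two connecting edges, using the constraint at the new subdivision vertex. Let $\varphi$ be a Petersen coloring of $F$.

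First, restricting $\varphi$ to $W_{\text{top}}$ gives one of the four patterns $(x,x,x,x)$, $(x,y,x,y)$, $(s,y,s,y)$, $(s,y,t,z)$ of Lemma~\ref{lem:W_coloring}, relative to some edge $x$ of $P$. This pattern describes $(\varphi(r^1),\varphi(b^1),\varphi(e_{\text{RT}}),\varphi(e_L))$. Similarly, restricting to $W_{\text{bot}}$ gives a pattern relative to some edge $x'$, describing $(\varphi(e_L),\varphi(e_{\text{RB}}),\varphi(b^2),\varphi(r^2))$. Here the roles are fixed by the joins: $o_{2t}$ and $i_{1b}$ both carry $\varphi(e_L)$, while $o_{1t}$ carries $\varphi(e_{\text{RT}})$ and $i_{2b}$ carries $\varphi(e_{\text{RB}})$. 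The gluing conditions are:
\begin{itemize}
\item the two copies assign the same color to $e_L$;
\item $\varphi(e_{\text{RT}})$, $\varphi(e_{\text{RB}})$ and $\varphi(e)$ are the three edges at a single vertex of $P$.
\end{itemize}
In particular, $\varphi(e_{\text{RT}})\neq\varphi(e_{\text{RB}})$ are adjacent, and $\varphi(e)$ is the third edge at their common vertex, so $\varphi(e)$ is determined by them.

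Second, we use the symmetry of $P$: it is edge-transitive, and the stabilizer of an edge acts transitively on the ordered choices of endpoint and incident edge (the automorphism group has order $120$ and acts regularly on $3$-arcs). We may therefore normalize the top pattern, fixing $x$ and the labels $y,z,s,t$ as in Figure~\ref{fig:petersen_labelled}. This leaves at most four top cases. For each, the colors $c_L=\varphi(e_L)$ and $c_T=\varphi(e_{\text{RT}})$ are fixed edges.

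For the bottom copy we enumerate all pairs (pattern type, choice of $x'$ and labelling) whose entry at $i_1$ equals $c_L$. Among those, we keep the ones whose $i_2$-entry $c_B$ is adjacent to and distinct from $c_T$. Since the $i_1$-entry is $x'$ or one of the four edges adjacent to $x'$, and the patterns are rigid, each top case leaves a short list of possibilities for $x'$. Each survivor then determines the multisets $\{\varphi(r^1),\varphi(r^2)\}$, $\{\varphi(b^1),\varphi(b^2)\}$ and $\{\varphi(e)\}$.

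Finally, we reduce the resulting list of configurations modulo automorphisms of $P$ fixing nothing in particular. To recognise classes we use invariants: the pairwise distances in the line graph of $P$ among the five colors (cf.\ Corollary~\ref{cor:Wdistances}), and multiplicities such as whether $r^1,r^2$ receive equal colors. We then match each class to one of $\mathcal{C}_1,\dots,\mathcal{C}_9$ in Figure~\ref{fig:configurations}. Only containment needs to be shown, so non-extendable combinations need not be ruled out beyond the two gluing conditions. Still, everything produced must land in the nine listed classes.

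The main obstacle is the bookkeeping in the middle step. A single fixed $c_L$ can arise in the bottom copy in many ways: for instance, with pattern $(s,y,t,z)$ it can arise with $c_L$ playing the role of $s$ for two different endpoint/edge choices. Some resulting configurations coincide only up to a nontrivial automorphism. I would organize the enumeration by the relation between $c_L$ and $c_T$ in the top pattern, which is either equal ($(x,x,x,x)$ and $(s,y,s,y)$) or adjacent/at distance two. Equal-type bottoms pair naturally with equal-type tops. I would then verify each class against the figure, double-checking by a short computer enumeration of $P$-$\mathrm{Col}(F)$ if needed.
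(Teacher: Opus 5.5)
Your plan is correct and is essentially the paper's proof: both apply Lemma~\ref{lem:W_coloring} to $W_{\text{top}}$ and $W_{\text{bot}}$ and glue along $e_L$ and the subdivision vertex. The paper only orders the bookkeeping differently, by the interface colors rather than by the top pattern: it fixes $\varphi(e_L)=a$, reduces the adjacent pair $(\varphi(e_{\text{RT}}),\varphi(e_{\text{RB}}))$ to five classes using Corollary~\ref{cor:Wdistances} and the top--bottom symmetry of $F$, and then each half independently yields its outer pair, with a second option exactly when its inner pair is at distance two.

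One slip in your organizing remark: in the pattern $(s,y,s,y)$ the interface colors are $c_T=s$ and $c_L=y$, which are at distance two, so $c_T=c_L$ happens only for $(x,x,x,x)$.
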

\begin{proof}

Let $\varphi$ be a Petersen coloring of the 5-pole $F$. The 5-pole $F$ is constructed by joining two copies of the 4-pole $W$, as explained at the beginning of Section~\ref{sec:5pole}.  
% \gi{ In what follows, we refer to the two dangling edges of $W_{top}$ that connect towards $W_{bot}$ (whose colored images correspond to $\varphi(e_L)$ and $\varphi(e_{RT})$) as the "bottom" dangling edges of $W_{top}$, while its remaining free dangling edges are referred to as its "top" dangling edges. Analogously, we refer to the dangling edges of $W_{bot}$ that connect towards $W_{top}$ (corresponding to $\varphi(e_L)$ and $\varphi(e_{RB})$) as the "top" dangling edges of $W_{bot}$, and its remaining free dangling edges as its "bottom" dangling edges.}

In the rest of this proof, we consider the labelling of the edges of the Petersen graph as in Figure~\ref{fig:petersen_labelled}.

Without loss of generality, due to the edge-transitivity of the Petersen graph $P$, we may assume the edge $e_L$ is colored $\varphi(e_L) = a \in E(P)$. 
By Corollary~\ref{cor:Wdistances}, any valid pair of dangling edges on one side of $W$ must be at a distance of 0, 1, or 2 in $P$. Since we fix $\varphi(e_L) = a$, this requires $\text{dist}_{P}(\varphi(e_{RT}), a) \in \{0,1,2\}$ and $\text{dist}_{P}(\varphi(e_{RB}), a) \in \{0,1,2\}$. 
Moreover, since $e_{RT}$ and $e_{RB}$ are adjacent in $F$,  their images in $P$ must also be adjacent. 

Up to the automorphisms of $P$ that fix the edge $a$ and by the horizontal symmetry of $F$ , there are exactly five equivalence classes for the ordered pair of adjacent edges $(\varphi(e_{RT}), \varphi(e_{RB}))$ bounded by distance 2 from $a$:  $(a, b)$, $(b, c)$, $(d, j)$, $(b, d)$, and $(j,f)$. From these cases, using Lemma~\ref{lem:W_coloring}, we will deduce all possible configurations of $\varphi$.

\begin{itemize}
    \item \textbf{Case 1: $(\varphi(e_{RT}), \varphi(e_{RB})) = (a, b)$} \\
     It follows $\varphi(e)=c$. Moreover,  since the two bottom dangling edges of $W_{top}$ have colors $(a, a)$, this uniquely forces its top dangling edges to $(a, a)$. While, the top dangling edges of $W_{bot}$ have $(a, b)$, forcing its bottom dangling edges to $(b, a)$. This yields exactly one configuration, denoted $\mathcal{C}_1$.
    
    \item \textbf{Case 2: $(\varphi(e_{RT}), \varphi(e_{RB})) = (b, c)$} \\
    It follows $\varphi(e)=a$. Moreover, the bottom dangling edges of $W_{top}$ have $(a, b)$, uniquely forcing the top dangling edges to $(b, a)$. The top dangling edges  of $W_{bot}$ have $(a, c)$, uniquely forcing the bottom dangling edges to $(c, a)$. This yields exactly one configuration, $\mathcal{C}_3$.
    
    \item \textbf{Case 3: $(\varphi(e_{RT}), \varphi(e_{RB})) = (d, j)$} \\
    It follows $\varphi(e)=b$. Moreover, the bottom dangling edges of $W_{top}$ have $(a, d)$, which branches into two valid assignments for the top dangling edges: $(d, a)$ and $(j, c)$. The top dangling edges to $W_{bot}$ have $(a, j)$, branching into $(j, a)$ and $(d, c)$ for the bottom dangling edges. Pairing these branches gives $4$ possibilities. Under the automorphism group of $P$, these collapse into two distinct configurations, each appearing twice. We denote these $\mathcal{C}_5$ and $\mathcal{C}_7$.

    \item \textbf{Case 4: $(\varphi(e_{RT}), \varphi(e_{RB})) = (b, d)$} \\
    It follows $\varphi(e)=j$. Moreover, the bottom dangling edges to $W_{top}$ have $(a,b)$ forcing the top edges to $(b, a)$. The top dangling edges of $W_{bot}$ have $(a, d)$, branching into $(d, a)$ and $(j, c)$. This gives $2$ possibilities, mapping to distinct configurations $\mathcal{C}_2$ and $\mathcal{C}_6$.
    
    \item \textbf{Case 5: $(\varphi(e_{RT}), \varphi(e_{RB})) = (j, f)$} \\
    It follows $\varphi(e)=g$. Moreover, the bottom dangling edges of $W_{top}$ have $(a, j)$, branching into $(j, a)$ and $(d, c)$. The top dangling edges of $W_{bot}$ have $(a, f)$, branching into $(f, a)$ and $(i, h)$. This generates $4$ possibilities. Under automorphisms of $P$, these resolve into three distinct configurations: $\mathcal{C}_8$ (appearing twice), $\mathcal{C}_4$, and $\mathcal{C}_9$.\qedhere
\end{itemize}
\end{proof}

\section{Two counterexamples of order $52$} \label{section:52}

We consider the cubic graph $G$ of order 52 depicted in Figure~\ref{fig:52graph}. The graph $G$ is constructed from three copies of the 5-pole $F$, say $F_1, F_2, F_3$, connected cyclically and with a single central vertex $v$, as shown in the schematic representation in Figure~\ref{fig:graph_structure}.

\begin{figure}[htbp]
\centering
\includegraphics[width=6cm]{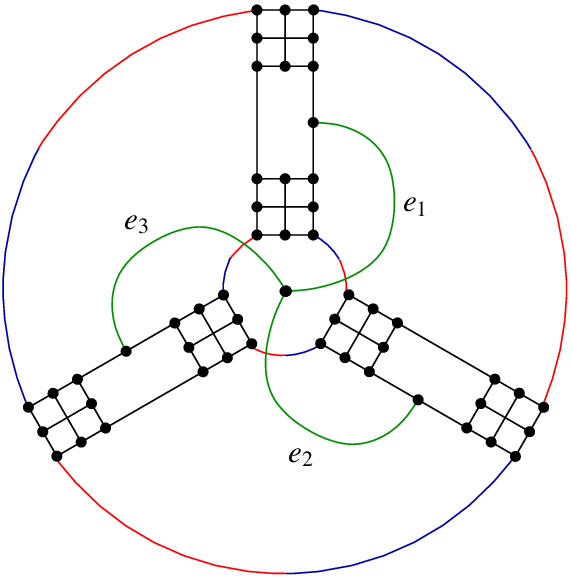}

\caption{A smallest known cubic graph with no Petersen coloring.}
\label{fig:52graph}
\end{figure}

The connections in $G$ are defined cyclically modulo 3, where the edges $B_i$ of $F_i$ are joined to the edges $R_{i+1}$ of $F_{i+1}$, and the three middle edges $e_1, e_2, e_3$ meet at the central vertex $v$.

\begin{theorem}
The cubic graph $G$ of order 52 does not admit a Petersen coloring.
\end{theorem}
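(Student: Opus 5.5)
We argue by contradiction: suppose $\varphi$ is a Petersen coloring of $G$. Restricting $\varphi$ to each copy $F_i$ gives a Petersen coloring of $F_i$. By Lemma~\ref{lem:5pole_configs}, the configuration $(\varphi(R_i),\varphi(B_i),\{\varphi(e_i)\})$ of each restriction is equivalent to one of $\mathcal{C}_1,\dots,\mathcal{C}_9$. The global constraints are as follows:
\begin{enumerate}[label=(\roman*)]
\item Since $B_i$ is joined to $R_{i+1}$, the multisets satisfy $\varphi(B_i)=\varphi(R_{i+1})$ for $i=1,2,3$ (indices mod $3$).
\item Since $e_1,e_2,e_3$ meet at $v$, the colors $\varphi(e_1),\varphi(e_2),\varphi(e_3)$ are the three edges incident to a single vertex $w_v$ of $P$. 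In particular, they are pairwise distinct and pairwise adjacent.
\end{enumerate}
The proof therefore reduces to showing that no triple of configurations, each taken from the nine classes and each placed in $P$ by an arbitrary automorphism, satisfies (i) and (ii) simultaneously.

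The first step is to record, for each $\mathcal{C}_k$, three invariants up to $\mathrm{Aut}(P)$. These are the type of the red multiset (a repeated edge $\{x,x\}$, two adjacent edges, or two edges at distance $2$), the type of the blue multiset, and the position of the green edge relative to both. We can read these directly from the five cases in the proof of Lemma~\ref{lem:5pole_configs}. For example, $\mathcal{C}_1$ has red $\{a,a\}$, blue $\{a,b\}$ and green $c$. The green edge is then adjacent to a blue edge, and the repeated red color is adjacent to green.

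Condition (i) then acts as a matching constraint. The blue type of $F_i$ must equal the red type of $F_{i+1}$, so we form a directed "compatibility graph" on $\{\mathcal{C}_1,\dots,\mathcal{C}_9\}$ with an arc $\mathcal{C}_k\to\mathcal{C}_l$ when the blue multiset of $\mathcal{C}_k$ can be mapped to the red multiset of $\mathcal{C}_l$. The only admissible global patterns are closed walks of length $3$ in this graph, and we enumerate these first.

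For each surviving closed walk we use the actual edges rather than just the types. Fixing $F_1$'s coloring up to automorphism, the common blue/red multiset determines the embedding of $F_2$'s configuration up to the stabilizer of that multiset in $\mathrm{Aut}(P)$. This stabilizer is small: the stabilizer of an edge, of a $2$-path, or of a pair of edges at distance $2$. The same then holds for $F_3$, and finally $\varphi(B_3)=\varphi(R_1)$ must close up. We expect that every closing triple violates (ii): either two green edges coincide, or they fail to be adjacent, or the three green edges are pairwise adjacent but form a path rather than a star. The useful invariant is the distance between each green edge and its own red and blue pairs. Chasing these distances around the triangle should force the three green edges to lie at distance $2$, or to be equal, somewhere along the cycle.

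The main obstacle is this last step, because the case analysis could blow up. We plan to control it in two ways. First, we exploit the $\mathbb{Z}_3$ rotational symmetry of $G$ and the reflection symmetry of $F$, which exchanges the roles of $R$ and $B$ and reverses the cycle, so that only walks up to rotation and reversal are considered. Second, we look for a single parity- or distance-type invariant that rules out most walks at once. As a concrete attempt, we would classify each configuration by whether green is adjacent to a red color, a blue color, both, or neither. We would then show that (ii) forces each consecutive pair $F_i,F_{i+1}$ to share a vertex of $P$ among the green edges, and that this is incompatible with the blue-to-red transfer around a cycle of odd length $3$. Any configurations that escape this invariant would be handled by direct inspection of the few remaining small stabilizer cases.
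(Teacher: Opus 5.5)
There is a genuine gap. Your reduction matches the paper's: restrict $\varphi$ to each $F_i$, invoke Lemma~\ref{lem:5pole_configs}, impose $\varphi(B_i)=\varphi(R_{i+1})$ and the star condition at $v$, then study transitions between consecutive poles. But the proposal stops exactly where the proof begins. That every closed walk fails is only stated as an expectation (``we expect'', ``should force''), and the parity/odd-cycle invariant is never formulated.

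Type information alone cannot finish the job. Reading the types off the cases of Lemma~\ref{lem:5pole_configs}:
\begin{itemize}
\item the red pair is a repeated edge only in $\mathcal{C}_7$, two adjacent edges in $\mathcal{C}_1$--$\mathcal{C}_6$, and two edges at distance $2$ in $\mathcal{C}_8,\mathcal{C}_9$;
\item the blue pair is repeated in $\mathcal{C}_1$--$\mathcal{C}_5$, adjacent in $\mathcal{C}_6,\mathcal{C}_7,\mathcal{C}_8$, and at distance $2$ in $\mathcal{C}_9$.
\end{itemize}
Hence your digraph does have closed $3$-walks: $\mathcal{C}_9\mathcal{C}_9\mathcal{C}_9$, $\mathcal{C}_6\mathcal{C}_6\mathcal{C}_6$, and $\mathcal{C}_k\mathcal{C}_7\mathcal{C}_6$ for $k\le 5$. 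Each needs a specific geometric fact that your sketch does not supply:
\begin{itemize}
\item[(1)] Two perfect matchings of $P$ sharing two edges coincide. So two consecutive $\mathcal{C}_9$'s put both green edges in one perfect matching, and they are equal or non-adjacent.
\item[(2)] In $\mathcal{C}_6$ the green edge is one of the red edges and meets neither blue edge. So $\mathcal{C}_6\to\mathcal{C}_6$ yields non-adjacent green edges.
\item[(3)] After $\mathcal{C}_7\to\mathcal{C}_6$, the third pole would need a red edge at distance $3$ from its repeated blue edge. This occurs in none of $\mathcal{C}_1$--$\mathcal{C}_5$.
\end{itemize}
None of this is a parity phenomenon, and (1) and (2) are already pairwise obstructions. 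Carrying these checks out is exactly the paper's elimination argument.

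The symmetry reduction you plan to use is also unavailable, because $F$ has no automorphism exchanging $R$ and $B$. In $W_{\text{top}}$, $r^1=i_{1t}$ and $o_{1t}$ (which becomes $e_{RT}$) were incident to the same deleted vertex of $P$, whereas $b^1=i_{2t}$ is partnered with $o_{2t}=e_L$. Likewise, in $W_{\text{bot}}$, $r^2$ is partnered with $e_{RB}$ and $b^2$ with $e_L$. The only symmetry of $F$ swaps the two copies of $W$ and preserves each color class.

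This asymmetry is visible in the configuration list itself: five configurations have a repeated blue edge, but only one has a repeated red edge. So $G$ has no reversal symmetry, and directed walks cannot be identified with their reversals.

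Your sample entry for $\mathcal{C}_1$ has the colors swapped for the same reason. By Lemma~\ref{lem:W_coloring}, $(i_{1b},i_{2b})=(a,b)$ forces $(o_{1b},o_{2b})=(a,b)$, and $(o_{1t},o_{2t})=(a,a)$ forces $(i_{1t},i_{2t})=(a,a)$. Hence $\mathcal{C}_1$ has red $\{a,b\}$ and blue $\{a,a\}$. Since the whole argument consists of matching blue types to red types, this is precisely the information that must be tracked correctly.
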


\begin{proof}
Suppose, for the sake of contradiction, that $G$ admits a Petersen coloring $\varphi: E(G) \to E(P)$. Up to automorphisms of $P$, the restriction of $\varphi$ to any of the three 5-poles $F_i$ produces exactly one of the configurations $\mathcal{C}_1,\mathcal{C}_2,\ldots,\mathcal{C}_9$.

The Petersen coloring $\varphi$ satisfies the following conditions for all $i \in \{1,2,3\}$ (indices modulo 3):
\begin{enumerate}[label=T\arabic*)]
    \item $\varphi(B_i) = \varphi(R_{i+1})$;
    \item $\{\varphi(e_1), \varphi(e_2), \varphi(e_3)\}$ is a set of three distinct edges in $P$ sharing a common vertex.
\end{enumerate}

\begin{figure}[htbp]
\centering
\includegraphics[width=4cm]{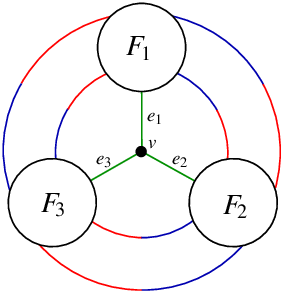}
\caption{Schematic representation of the cubic graph $G$ composed of three 17-vertex 5-poles $F_1, F_2, F_3$ joined cyclically around a central vertex $v$ via middle edges $e_1, e_2, e_3$.}
\label{fig:graph_structure}
\end{figure}

We now analyze the possible transitions of configurations  between consecutive 5-poles $F_i$ and $F_{i+1}$ under these two constraints.

\begin{itemize}
    \item \textbf{Elimination of $\mathcal{C}_8$ :}
 First, we prove that if $F_i$ has configuration $\mathcal{C}_8$, then there is no possible configuration for $F_{i+1}$.
 Indeed, in configuration $\mathcal{C}_8$, the multiset $\varphi(B_i)$ consists of two distinct edges in $P$ that share a common vertex $x$. The green edge $\varphi(e_i)$ has both ends at distance 2 from $x$.
   According to T1, the red edges of $F_{i+1}$ must satisfy $\varphi(R_{i+1}) = \varphi(B_i)$, which means that $\varphi(R_{i+1})$ must consist of two distinct edges incident to $x$. In all configurations with this property, i.e. $\mathcal{C}_1, \ldots, \mathcal{C}_6$, the green edge $\varphi(e_{i+1})$ is also incident to $x$. Hence $\varphi(e_i)$ and $\varphi(e_{i+1})$ are not adjacent, which contradicts T2.

    \item \textbf{Elimination of $\mathcal{C}_9$:} In the configuration $\mathcal{C}_9$, the multiset $\varphi(B_i)$ consists of two non-adjacent edges in $P$. By T1, the multiset $\varphi(R_{i+1})$ must also consist of two non-adjacent edges, which restricts the candidate configurations for $F_{i+1}$ exclusively to $\mathcal{C}_8$, which is already excluded, and $\mathcal{C}_9$. Now, we show that a configuration $\mathcal{C}_9$  cannot be followed by another configuration $\mathcal{C}_9$. Indeed, $\mathcal{C}_9$ consists of five distinct edges that form a perfect matching of $P$. By T1, the perfect matching corresponding to the configuration of $F_i$ should share two edges with the perfect matching corresponding to the configuration of $F_{i+1}$, since every two distinct perfect matchings of $P$ share exactly one edge, they are the same set of edges. Consequently, both green edges $\varphi(e_i)$ and $\varphi(e_{i+1})$ belong to the same perfect matching, then they are either the same edge or not adjacent, a contradiction to T2. Consequently, configuration $\mathcal{C}_9$ is also ruled out.

   \item \textbf{Elimination of $\mathcal{C}_7$:} First of all, we prove that if $F_i$ has configuration $\mathcal{C}_7$, then $F_{i+1}$ must have configuration $\mathcal{C}_6$. Indeed, in configuration $\mathcal{C}_7$, the multiset $\varphi(B_i)$ consists of two distinct edges in $P$ that share a common vertex $x$, to which the green edge $\varphi(e_i)$ is also incident. According to T1, the red edges of $F_{i+1}$ must satisfy $\varphi(R_{i+1}) = \varphi(B_i)$, which means that $\varphi(R_{i+1})$ must consist of two distinct edges incident to $x$. This requirement restricts the candidate configurations for $F_{i+1}$ to $\mathcal{C}_1, \mathcal{C}_2, \mathcal{C}_3, \mathcal{C}_4, \mathcal{C}_5,$ and $\mathcal{C}_6$. Furthermore, in all remaining cases, except $\mathcal{C}_6$, the position of the green edge forces $\varphi(e_i)=\varphi(e_{i+1})$, which is a contradiction to T2.

    Assume now that $F_i$ has configuration $\mathcal{C}_7$, which forces $F_{i+1}$ to have configuration $\mathcal{C}_6$. We show that no valid configuration remains for the third pole $F_{i+2}$.

 Since the configuration $\mathcal{C}_7$ on $F_i$ requires $\varphi(R_i)$ to be a double edge, the multiset $\varphi(B_{i-1})=\varphi(B_{i+2})=\varphi(R_i)$ must have a double edge.
 Moreover, the configuration $\mathcal{C}_6$ of $F_{i+1}$ has $\varphi(B_{i+1})$ consisting of two distinct adjacent edges; then the edges in $\varphi(R_{i+2})$ must also be distinct and adjacent. Finally, the relative position of the edges in $\mathcal{C}_7$ and $\mathcal{C}_6$ forces one of the two edges in $\varphi(R_{i+2})$ to be at distance $3$ from the double edge in $\varphi(B_{i+2})$. Since no configuration has such a property, the choice of $\mathcal{C}_7$ cannot be completed by any configuration on $F_{i+2}$, and $\mathcal{C}_7$ cannot appear as a configuration of $\varphi$.

    \item \textbf{Elimination of $\mathcal{C}_1, \mathcal{C}_2, \mathcal{C}_3, \mathcal{C}_4, \mathcal{C}_5$:} Configurations $\mathcal{C}_1, \mathcal{C}_2, \mathcal{C}_3, \mathcal{C}_4$ and $\mathcal{C}_5$ all require $\varphi(B_i)$ to be a double edge. A double edge in $\varphi(B_i)$ must induce a double edge in $\varphi(R_{i+1})$ in $F_{i+1}$. Since $\mathcal{C}_7$ is the only configuration that generates such a double red edge, by the previous point all such configurations are also impossible.

    \item \textbf{Elimination of $\mathcal{C}_6$ :}
        We prove that configuration $\mathcal{C}_6$ for $F_i$ cannot be followed by another configuration $\mathcal{C}_6$ for $F_{i+1}$.
        Suppose that $F_i$ has configuration $\mathcal{C}_6$, the multiset $\varphi(B_i)$ consists of two distinct edges in $P$ sharing a vertex $x$. The green edge $\varphi(e_i)$ is not adjacent to either of them. By T1, $\varphi(B_i)=\varphi(R_{i+1})$ , which forces $\varphi(e_{i+1})$ to be one of the two edges in $\varphi(B_i)$ and thus not adjacent to $\varphi(e_i)$, contradicting T2.
    \end{itemize}

Since no sequence of configurations is admissible for $F_1, F_2, F_3$, $G$ does not admit a Petersen coloring.
\end{proof}

By switching two suitable edges of the graph considered so far (see Figure~\ref{fig:graph_switched}), we obtain a new example of a cyclically 4-edge-connected cubic graph of order 52 without a Petersen coloring.
The switch only changes the pairing of the two edges joining $B_2$ to $R_3$; therefore the multiset identity $\varphi(B_i)=\varphi(R_{i+1})$
remains valid. Hence, T1 and T2, and consequently the entire proof, are unchanged.

\begin{figure}[htbp]
\centering
\includegraphics[width=4cm]{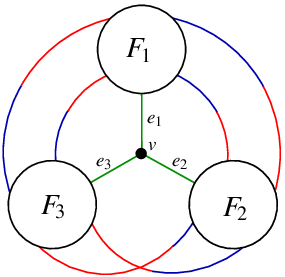}
\caption{Schematic representation of another cubic graph of order $52$ and without a Petersen coloring.}
\label{fig:graph_switched}
\end{figure}

\section{An infinite family of cyclically $4$-edge-connected cubic graphs without a $P$-coloring}\label{sec:infinite}

Let $M(e_1,\dots, e_4)$ and $X(a_1,\dots,a_4)$ be two $4$-poles. Moreover, assume that $X$ is a submultipole of a cubic graph $G$: i.e.\ $V(X)\subseteq V(G)$ and $\partial_G(V(X)) = \{a_1,\dots,a_4\}$ is a $4$-edge-cut in $G$. 
Then, the graph induced by $V(G)\setminus V(X)$ plus the dangling edges $a_1,\dots,a_4$ constitutes a $4$-pole $Y$. Consider the graph $H$ obtained from the multipoles $Y$ and $M$ by glueing their dangling edge $a_i$ with $e_i$ for every $i\in \{1,2,3,4\}.$ We say that $H$ is obtained by \emph{replacing} $X$ with $M$.

Let $M_1(a_1,a_2,a_3,a_4),M_2(b_1,b_2,b_3,b_4),M_3(c_1,c_2,c_3,c_4),M_4(d_1,d_2,d_3,d_4)$ be the $4$-poles as in Figure~\ref{fig:FandPlus2}. The following lemma was obtained with the help of a computer.

\begin{lemma} \label{lemma:Mka}
  $P\text{-Col}(M_i)=P\text{-Col}(W)$ for every $i\in\{1,2,3,4\}$.
\end{lemma}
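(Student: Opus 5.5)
We do not see Figure~\ref{fig:FandPlus2}, so the plan assumes only that $M_1,\dots,M_4$ are small cubic $4$-poles, presumably of orders $8,10,12,14$, obtained from $W$ by local insertions (e.g.\ replacing an edge or a vertex of $W$ by a small gadget, or inserting a ``ladder rung'' across two edges). We would prove each equality $P\text{-Col}(M_i)=P\text{-Col}(W)$ by two inclusions. We take as known $P\text{-Col}(W)$ as described in Lemma~\ref{lem:W_coloring}, namely the union of the $\mathrm{Aut}(P)$-orbits of the four types $(x,x,x,x)$, $(x,y,x,y)$, $(s,y,s,y)$, $(s,y,t,z)$. We also use the fact that all four types actually occur, which is the content of the three cases in the proof of Lemma~\ref{lem:W_coloring}. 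Since both sets are $\mathrm{Aut}(P)$-invariant, it suffices to work with one representative per orbit.

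For the inclusion $P\text{-Col}(W)\subseteq P\text{-Col}(M_i)$ we would exhibit, for each of the four types, an explicit Petersen colouring of $M_i$ realising it. If $M_i$ contains $W$ with a gadget substituted in, the natural route is to take the known colouring of $W$ and extend it across the gadget. For instance, a digon or a $4$-cycle inserted on two parallel edges can be coloured by alternating the colour of the replaced edge with a neighbouring colour at a common vertex of $P$. This step is constructive and routine.

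For the reverse inclusion $P\text{-Col}(M_i)\subseteq P\text{-Col}(W)$ we would mimic the proof of Lemma~\ref{lem:W_coloring} and of Lemma~\ref{lem:5pole_configs}. Decompose $M_i$ along a small edge cut into pieces whose colouring sets are already controlled. For the gadget part, the first step is to show that any colouring of it induces on its boundary exactly a pair or quadruple of colours that a single edge or a vertex of $W$ could carry; this turns a colouring of $M_i$ into a colouring of $W$ with the same dangling colours. Where this fails, we would fix a $5$-cycle of $M_i$ and run the three-case analysis of Figure~\ref{fig:5cycle}, propagating forced colours to the dangling edges. By edge-transitivity of $P$ we can normalise one interior edge to $a$, as in Lemma~\ref{lem:5pole_configs}, and use Corollary~\ref{cor:Wdistances}-type distance bounds to prune.

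The main obstacle is this second inclusion. A gadget may admit boundary colourings not realisable by a single edge, for example two ends of a replaced edge receiving different colours, and one must show that such colourings still produce dangling sequences within the four types. This is exactly where the authors resorted to a computer. A by-hand proof would require enumerating, up to $\mathrm{Aut}(P)$ (order $120$) and the symmetries of $M_i$, all colourings of the boundary of the gadget, which is tedious. Failing a clean reduction, we would fall back on the same verification the paper uses: an exhaustive backtracking search over Petersen colourings of each $M_i$, recording the set of dangling $4$-tuples and comparing it with the set computed for $W$.
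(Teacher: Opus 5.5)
Your fallback, an exhaustive search over the Petersen colourings of each $M_i$ that records the dangling $4$-tuples and compares them with $P\text{-Col}(W)$, is exactly the paper's proof: the lemma is stated there as obtained with the help of a computer, and no hand argument is given. The by-hand sketch before the fallback is not part of a proof and should not be presented as one, because it is never carried out and its structural guesses conflict with the paper: replacing the $8$-vertex $W$ turns the $52$-vertex graph into $G_{60},\dots,G_{66}$, and $M_1$ turns $G_{n-8}$ into $G_n$, so $M_1,\dots,M_4$ have $16,18,20,22$ vertices (not $8,\dots,14$), each contains a copy of $W$, and the resulting graphs have girth $5$, which rules out digon or $4$-cycle gadgets.
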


% Let $S$ be one of the two counterexamples on 112 vertices to the Petersen Coloring Conjecture from~\cite{P26}. Then, $S$ contains $F$ as a submultipole, with dangling edges $o_1,i_1,o_2,i_2$.
% We let $F_0 = F$ and define the following infinite family $\{S_n\}_{n\ge0}$:

% \begin{itemize}
%     \item $S_0 = S$;
%     \item for every $n\ge 1$, $S_n$ is obtained by replacing the multipole $F_{n-1}$ with $F_n$ in $S_{n-1}$.
% \end{itemize}

% Since $P$-Col$(F_n) = P$-Col$(F)$ for every $n\ge1$, it follows that $S_n$ does not admit any $P$-coloring.

% It is also not difficult to see that $S_n$ is cyclically $4$-edge-connected, and, except for $S_0$, has girth equal to $4$.

% With a similar idea, it is possible to construct a family of cyclically $4$-edge-connected counterexamples with girth 5.

\begin{theorem}
  For each even $n\geq60$ there exists a cyclically $4$-edge-connected cubic graph of girth $5$ on $n$ vertices which does not admit a $P$-coloring.
\end{theorem}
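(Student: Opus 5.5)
We build the graphs by replacing copies of $W$ inside the order-$52$ counterexample $G$ with the $4$-poles $M_1,\dots,M_4$ of Lemma~\ref{lemma:Mka}. The graph $G$ has order $52$. Each $F_i$ contains two copies of $W$ ($W_{\text{top}}$ and $W_{\text{bot}}$), so $G$ contains six pairwise disjoint copies of the Petersen $4$-pole $W$. Each copy is a submultipole whose boundary $\partial_G(V(W))$ is a $4$-edge-cut.

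The first step is a general replacement principle. Suppose $H$ is obtained from a cubic graph $G'$ by replacing a submultipole $X$ with a $4$-pole $M$ such that $P\text{-Col}(M)=P\text{-Col}(X)$, with the dangling edges matched in the same order. Then $H$ has a $P$-coloring if and only if $G'$ does. Indeed, a $P$-coloring of $H$ restricts to a $P$-coloring of $Y$ together with a tuple in $P\text{-Col}(M)$. That tuple also lies in $P\text{-Col}(X)$, so some coloring of $X$ realises it, and gluing the two gives a $P$-coloring of $G'$. The converse is symmetric. By Lemma~\ref{lemma:Mka}, replacing any copy of $W$ in $G$ by any $M_i$, with dangling edges aligned in the order $(i_1,i_2,o_1,o_2)$, therefore produces a graph with no $P$-coloring. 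The replacements can be iterated independently on the six disjoint copies, since each copy is untouched by the earlier replacements.

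The second step is counting vertices. Let $m_i=|V(M_i)|$, read off from Figure~\ref{fig:FandPlus2}. Presumably $m_1,\dots,m_4$ are chosen so that the increments $m_i-8$ are small integers, for example $m_i-8\in\{2,4,6,8\}$, or at least integers generating all even numbers from some point on. We then choose which of the six copies of $W$ to replace and by which $M_i$, so that $52+\sum(m_{i_k}-8)$ hits each even number in a starting window beginning at $60$. If the $M_i$ themselves contain substructures that can be recursively enlarged, for instance if some $M_i$ contains a copy of $W$ or of another $M_j$, then iterating the replacement gives unboundedly large orders. Otherwise the $M_i$ must form a parametric family, and we take the natural sequence in the figure. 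I expect the family to be designed so that some $M_i$ contains a copy of $W$ or of itself as a submultipole, which yields an induction on $n$ in steps of $2$ from a base set covering $60,\dots,60+2k$. The threshold $60$ suggests that the smallest admissible replacement adds $8$ vertices to $52$, so the base cases lie just above it.

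The third step, which I expect to be the main obstacle, is proving that each resulting graph is cyclically $4$-edge-connected and has girth exactly $5$. For girth we check that each $M_i$ has no cycle of length at most $4$. We also check that any short cycle passing through the boundary of $M_i$ must use at least two boundary edges together with paths in the exterior $Y$. In $G$ the endpoints of the four dangling edges of $W$ are pairwise at distance large enough to force length at least $5$, and the same distances in $M_i$ keep this property. A $5$-cycle inside an untouched copy of $W$ guarantees girth exactly $5$. For cyclic $4$-edge-connectivity we argue that a cycle-separating cut of size at most $3$ in the new graph would yield one in $G$ or inside some $M_i$. Concretely, we show each $M_i$ is internally well connected: every cut of $M_i$ with at most $3$ edges separating a cycle meets the boundary in a way that lets us contract $M_i$ back to $W$. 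Since $G$ is cyclically $4$-edge-connected, this gives the contradiction. This part needs a careful but routine case analysis on how a small cut intersects the replaced $4$-poles.
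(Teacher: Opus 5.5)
Your strategy is the paper's. The replacement principle in your first step is exactly how Lemma~\ref{lemma:Mka} is used, and the unbounded orders come, as you anticipate, from the $4$-poles $M_i$ themselves containing a copy of $W$.

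\textbf{The gap is in the counting step, which stays conditional.} You offer several alternatives (recursive containment, a parametric family, combinations over the six disjoint copies of $W$) and never commit to one. Without the hypothesis that some $M_i$ contains $W$, replacing only the six copies present in the $52$-vertex graph reaches orders at most $52+6\max_i(|V(M_i)|-8)$, so you get no infinite family.

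The paper fills this with data read off Figure~\ref{fig:FandPlus2}. The four $4$-poles have $16,18,20,22$ vertices, with $M_1$ the $16$-vertex one, so they add $8,10,12,14$ vertices to a copy of $W$. Each of them contains a copy of $W$. Consequently one copy of $W$ in the $52$-vertex graph suffices:
\begin{itemize}
\item replacing it by $M_1,\dots,M_4$ gives the base cases $60,62,64,66$;
\item the induction goes from $n-8$ to $n$ by replacing a copy of $W$ in $G_{n-8}$ by $M_1$, and the result again contains $W$ because $M_1$ does.
\end{itemize}
So the induction step is $8$, and the four base cases cover the even residues modulo $8$. Your first guess $|V(M_i)|-8\in\{2,4,6,8\}$ is inconsistent with the statement, since it would already give counterexamples on $54$, $56$, $58$ vertices. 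Your later reading of the threshold $60$ as a minimal increment of $8$ is the correct one.

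On girth and cyclic $4$-edge-connectivity, the paper says only ``it can be easily seen'' for $G_{60},\dots,G_{66}$ and gives no argument in the inductive step. Your plan to make these properties local to the inserted $4$-pole is what a rigorous induction needs. The inductive hypothesis only says that $G_{n-8}$ is cyclically $4$-edge-connected of girth $5$ and contains $W$, so one must know that replacing an arbitrary such copy of $W$ by $M_1$ preserves both properties.

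For girth, your condition suffices: $M_1$ has girth at least $5$, and its attachment vertices are pairwise at least as far apart in $M_1$ as the corresponding ones in $W$. A cycle crossing the $4$-edge boundary twice then shortens to a cycle of the old graph. A cycle crossing it four times has length at least $4+2+2=8$. The connectivity part remains a sketch in your proposal, just as in the paper.
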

\begin{proof}
  We prove by induction that for each even $n\ge60$ there exists a cyclically 4-edge-connected cubic graph of girth 5 and order $n$ which contains the 4-pole $W$ as a subgraph. For the basis we construct graphs on $60,62,64$, and $66$ vertices with the required properties. We start from one of the two graphs on 52 vertices constructed in Section~\ref{section:52}. The graph obviously contains a copy of $W$, we replace it with one of the 4-poles $M_1,M_2,M_3,M_4$. The Lemma~\ref{lemma:Mka} implies that the constructed graphs $G_{60}, G_{62}, G_{64}$ and $G_{66}$ also do not have a Petersen coloring. It can be easily seen that their girth is 5 and cyclic edge-connectivity is 4. Moreover, each of $M_1,M_2,M_3,M_4$ contains a copy of $W$, so do the graphs $G_{60}, G_{62}, G_{64}$, and $G_{66}$.

  For the induction step, assume that $n\ge 68$. The graph $G_{n-8}$ contains a copy of $W$ by the induction hypothesis. We replace this copy with $M_1$ constructing a graph $G_n$ on $n$ vertices. Again, by Lemma~\ref{lemma:Mka}, the constructed graph $G_n$ is a required counterexample which, moreover, contains a copy of $W$ since $M_1$ contains a copy of $W$.
\end{proof}

 \begin{figure}[h]
 \centering
  \includegraphics[width=9cm]{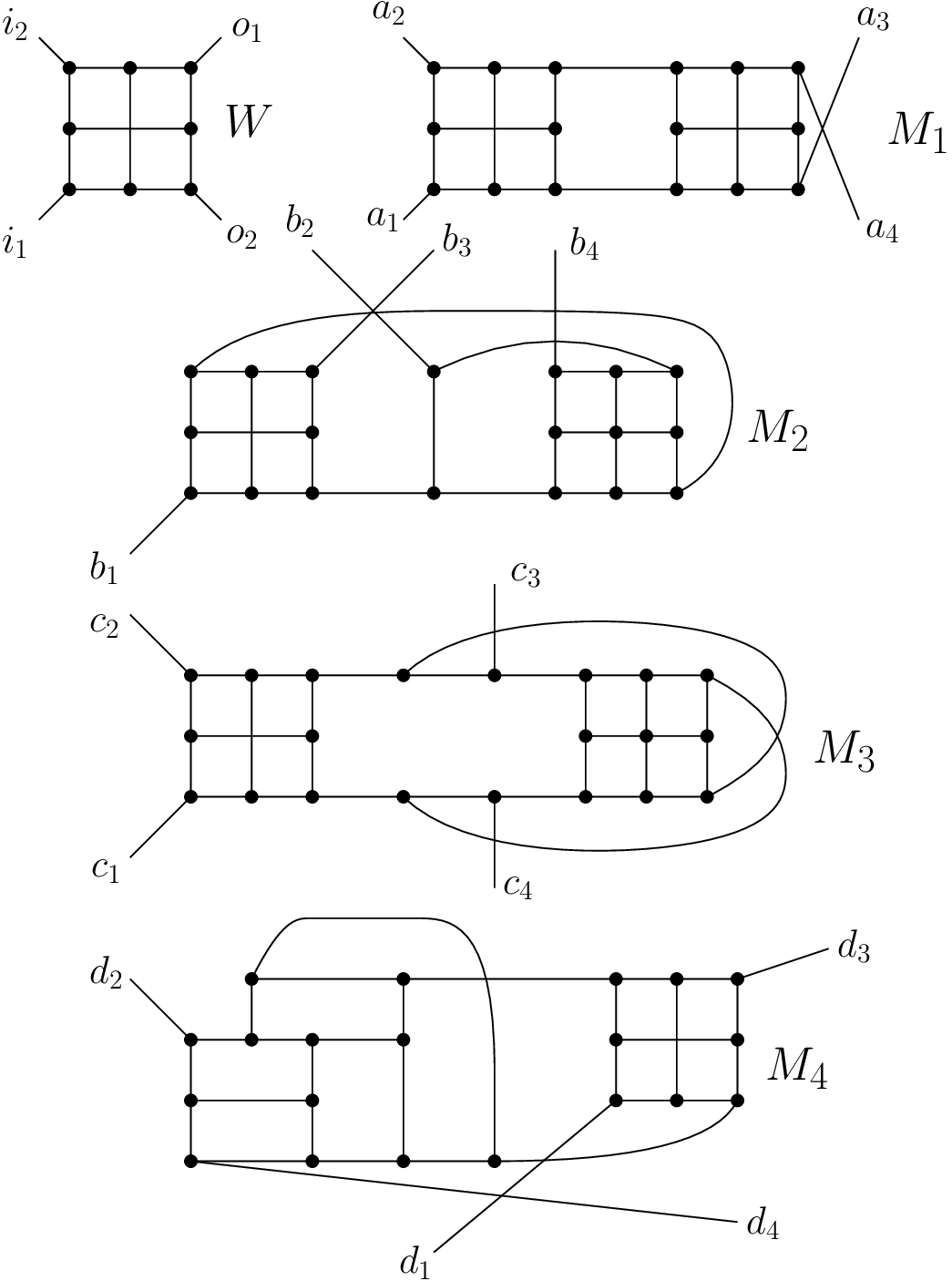}
  \caption{The 4-poles with the same set of $P$-Col.} \label{fig:FandPlus2}
 \end{figure}

\section{Further results and open problems} 
\label{sec:openProblems}

In this section, we present further results and pose several open problems.  

\subsection{Smallest possible counterexample}\label{sec:min_counterexample}

 The first very natural open problem is the following. 

\begin{problem}
    What is the smallest bridgeless cubic graph that does not admit a $P$-coloring?
\end{problem}

Following the terminology introduced by Brinkmann et al.~\cite{BGHM13}, we define a \emph{weak snark} as a cyclically $4$-edge-connected cubic graph that is not $3$-edge-colorable, while a \emph{snark} as a weak snark of girth at least $5$. It is well-known that a smallest counterexample to the Petersen Coloring Conjecture must be a weak snark. It was checked in~\cite{BGHM13} that every weak snark on at most $34$ vertices and every snark on at most $36$ vertices admits a $P$-coloring. The first and third authors subsequently completed the order-$36$ case in a joint paper with \v Skoviera~\cite{GMS19} by also generating and testing the weak snarks of girth $4$ on 36 vertices, and showed that Conjecture~\ref{PCC} has no counterexamples on at most $36$ vertices. Consequently, every counterexample to Conjecture~\ref{PCC} has at least $38$ vertices. Recently, Brinkmann and Van Overberghe~\cite{BVO26} developed a new generation algorithm for snarks, which allowed them to generate all weak snarks up to 38 vertices. Using their list of weak snarks on 38 vertices (there are 7\,142\,217\,899 such graphs), we verified that none of them is a counterexample to Conjecture~\ref{PCC}. This computation required approximately 12 CPU years. Together with our counterexamples on 52 vertices, this leads to the following observation.

\begin{observation}\label{obs:order_min_counterex}
The smallest counterexample to Conjecture~\ref{PCC} has at least 40 and at most 52 vertices.
\end{observation}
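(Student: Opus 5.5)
The observation has two halves, and each rests on a result already in place.

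For the upper bound of $52$, I will invoke the theorem of Section~\ref{section:52}. It shows that the cubic graph $G$ of order $52$ admits no $P$-coloring. It remains to check that $G$ is bridgeless. I will argue this from the structure of $G$: each $W$ is $3$-edge-connected, being the Petersen graph with two adjacent vertices removed, and every dangling edge is joined into a cycle-rich structure. Concretely, the copies of $F$ are connected cyclically by pairs of edges, and the three green edges meet at $v$. So no single edge separates $G$; indeed $G$ is cyclically $4$-edge-connected, as the paper already states. Hence $G$ is a counterexample of order $52$, and the smallest counterexample has at most $52$ vertices.

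For the lower bound of $40$, I will use minimality. A smallest counterexample must be a weak snark. The standard reductions are:
\begin{itemize}
\item small edge cuts (of size at most $3$) can be split and the colorings of the parts recombined, using edge-transitivity of $P$ and transitivity on paths of length $2$;
\item every $3$-edge-colorable cubic graph is $P$-colorable by mapping its three color classes to the three edges at a vertex of $P$.
\end{itemize}
By \cite{BGHM13} and \cite{GMS19}, all weak snarks on at most $36$ vertices are $P$-colorable. The new computation over the list of \cite{BVO26} covers all $7\,142\,217\,899$ weak snarks on $38$ vertices and finds a $P$-coloring for each. Since cubic graphs have even order, any counterexample has at least $40$ vertices.

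The main obstacle is not the logical step but the reliability of the exhaustive computation on $38$ vertices. Two things must be true: the generator of \cite{BVO26} must be complete, and the coloring search must be correct. To address the latter, I would certify each positive answer by storing or re-verifying the explicit $P$-coloring found, which is cheap to check independently. For the former, I would cross-check counts for smaller orders against \cite{BGHM13,GMS19}.
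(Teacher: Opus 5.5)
Your proposal follows the paper's argument. The upper bound comes from the $52$-vertex graphs of Section~\ref{section:52}, and the lower bound comes from four ingredients: a smallest counterexample must be a weak snark, the earlier verification of all weak snarks on at most $36$ vertices, the new exhaustive check of the $38$-vertex weak snarks of Brinkmann and Van Overberghe, and parity of the order. One peripheral slip: $W$ on its own is not $3$-edge-connected, since its four vertices that lost a neighbour have degree $2$ once dangling edges are ignored; this does no harm, because what you need is that $G$ is bridgeless, and that follows from $G$ being cyclically $4$-edge-connected as the paper states.
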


Note that our counterexamples on 52 vertices\footnote{The two counterexamples can be accessed directly at \url{https://houseofgraphs.org/graphs/57244} and \url{https://houseofgraphs.org/graphs/57278}.} (cf.\ Figures~\ref{fig:52graph} and~\ref{fig:graph_switched}), as well as the other counterexamples on 68 and 112 vertices, can also be obtained from the \textit{House of Graphs}~\cite{CDG23} by searching for the keywords ``Petersen Coloring''.

\subsection{Cyclically 5-edge-connected counterexamples}

In Section~\ref{sec:infinite}, we construct an infinite family of cyclically $4$-edge-connected cubic graphs that do not admit a Petersen coloring. This raises the following natural open problem.

\begin{problem}
    Do there exist cyclically $5$-edge-connected cubic graphs that do not admit a $P$-coloring?
\end{problem}

All counterexamples known so far contain multiple copies of the $4$-pole $W$, and our proof relies heavily on the constraints induced by its coloring properties. This suggests that a cyclically $5$-edge-connected counterexample, if one exists, would likely require a fundamentally different structural approach.

\subsection{Normal colorings}

Recall that a \emph{normal $k$-edge-coloring} of a cubic graph $G$ is a proper edge-coloring $c\colon E(G)\to\{1,\dots,k\}$ such that for every edge $uv\in E(G)$, we have $|\{c(e)\colon e\in \partial_G(u)\cup\partial_G(v)\}|\in \{3,5\}$. Jaeger~\cite{J_5-ec} proved that a cubic graph $G$ admits a $P$-coloring if and only if $G$ has a normal $5$-edge-coloring. The \emph{normal chromatic index} of a graph $G$, denoted by $\chi'_N(G)$, is defined as the minimum integer $k$ such that $G$ admits a normal $k$-edge-coloring.

Consequently, any counterexample $G$ to the Petersen Coloring Conjecture satisfies $\chi'_N(G) > 5$. We computationally verified that both of our counterexamples of order $52$ admit a \textit{strong} normal $6$-edge-coloring (i.e., a normal $6$-edge-coloring where $|\{c(e)\colon e\in \partial_G(u)\cup\partial_G(v)\}|=5$ for every $uv\in E(G)$) and the same holds for the 68-vertex and the two 112-vertex counterexamples. This implies that the normal chromatic index of all of these graphs is exactly~$6$.

Furthermore, it was proved by Mazzuoccolo and Mkrtchyan~\cite{Mazz_Mk2} that every simple cubic graph admits a normal $7$-edge-coloring.\footnote{For the subclass of bridgeless cubic graphs, this is simply a reformulation of Jaeger's 8-flow theorem,} but no examples of bridgeless cubic graphs requiring $7$ colors are currently known. 

The following conjecture remains open; it is attributed to \v{S}\'amal and was also explicitly stated in~\cite{Mazz_Mk}.

\begin{conjecture}\label{conj:normalchromaticindex}
    Let $G$ be a bridgeless cubic graph. Then, $\chi'_N(G)\leq 6$.
\end{conjecture}

Conjecture~\ref{conj:normalchromaticindex} serves as a natural relaxation of Conjecture~\ref{PCC} in the context of normal edge-colorings.\\

Let $G$ be a graph and $c$ an edge-coloring of $G$. An edge $uv$ of $G$ is called \emph{abnormal} if $|\{c(e)\colon e\in \partial_G(u)\cup\partial_G(v)\}|\notin \{3,5\}$. Let $N_G(c)$ be the set of
abnormal edges of $G$ with respect to $c$. 
%In~\cite{MMM} the following question is asked: Assume that a bridgeless cubic graph $G$ admits a proper $5$-edge-coloring $c$, such that $|N_G(c)| \le 2$. Can we prove that $G$ admits a normal $5$-edge-coloring? 
In~\cite{MMM} the following question was asked.

\begin{question}[{\cite[Question 3.1]{MMM}}]
    Assume that a bridgeless cubic graph $G$ admits a proper
$5$-edge-coloring $c$, such that $|N_G(c)| \le 2$. Can we prove that $G$ admits a
normal $5$-edge-coloring?
\end{question}

We computationally verified that the two $52$-vertex, one $68$-vertex, and two $112$-vertex counterexamples to the Petersen Coloring Conjecture all admit a proper $5$-edge-coloring with exactly two abnormal edges, so they provide a negative answer to the above question.

\subsection{Family of graphs coloring every bridgeless cubic graph}

%Let $\mathcal{H}$ be an inclusion-wise minimal set of cubic graphs such that, for every bridgeless cubic graph $G$, there is $H\in \mathcal{H}$ such that $H$ colors $G$.
%At this point, one could ask what such a set $\mathcal{H}$ looks like.

%Let $S_4$ be the graph obtained as follows: let $xyz$ be a $3$-cycle; add a new edge connecting $y$ to $z$; add a new vertex $w$ connected to $x$. 

%By combining results of~\cite{KMZ} and~\cite{MTZ}, it follows that a graph $H$ colors every bridgeless cubic graph if and only if $H$ contains $S_4$ as an induced subgraph. Note that $S_4$ has a bridge.
%If we only allow bridgeless cubic graphs to be in the set $\mathcal{H}$, then by Theorem 1.1 of~\cite{MMSW} it follows that $\mathcal{H}$ is infinite.

%Therefore, we conclude that $\mathcal{H}$ is finite if and only if there is an $H \in \mathcal{H}$ such that $S_4$ is an induced subgraph of $H.$ In such a case it follows that $\mathcal{H}$ contains only one graph, since $\mathcal{H}$ is inclusion-wise minimal. 

%Furthermore, by Theorem 1.2 of~\cite{MMSW}, the set $\mathcal{H}$ \dvd{here we should point out that this $\mathcal{H}$ only considers bridgeless graphs} \dvd{maybe there is a bit of an abuse of notation...} is unique and its elements are precisely those graphs that are colorable only by themselves. Hence, it would be interesting to investigate whether each of our counterexamples of order 52 are also colorable only by itself. This question is of particular relevance, as it is naturally tied to the fact that they are the smallest bridgeless cubic graphs without a Petersen Coloring.

Let $\mathcal{H}$ be an inclusion-wise minimal set of connected cubic graphs such that every connected bridgeless cubic graph $G$ is colored by some $H \in \mathcal{H}$. At this point, one might ask what such a set $\mathcal{H}$ looks like.

Let $S_4$ be the graph obtained as follows: start with a $3$-cycle $xyz$, add a second parallel edge between $y$ and $z$, and add a new vertex $w$ connected to $x$. By combining results of~\cite{KMZ} and~\cite{MTZ}, a graph $H$ colors every connected bridgeless cubic graph if and only if $H$ contains $S_4$ as a subgraph, which we call an \emph{$S_4$-graph}. Note that $S_4$ contains a bridge.

%\textcolor{orange}{As a consequence, a set $\mathcal{H}$ containing an $S_4$-graph $H$ is inclusion-wise minimal if and only if it contains $H$ only. By a result of~\cite{KMZ}, the Petersen graph can only be colored by itself or by an $S_4$-graph. Then, $\mathcal{H}$ has cardinality $1$ if and only if it contains an $S_4$-graph. We believe that the following could be an interesting question: are there \gi{Should we repeat inclusion-wise minimal in this question?} sets $\mathcal{H}$ of finite cardinality greater than $1$? Note that such a set would contain the Petersen graph and would not contain any $S_4$-graph.\gi{I propose "Note that because such a set cannot contain an $S_4$-graph, it must necessarily contain the Petersen graph itself to ensure the Petersen graph is colored."}}\dvd{The new sentence sounds good to me!}

%\dvd{Here in blue is a new version of this paragraph. If we like it we can remove the orange paragraph and comments.}

As a consequence, a set $\mathcal{H}$ containing an $S_4$-graph $H$ is inclusion-wise minimal if and only if it contains $H$ only. By a result of~\cite{KMZ}, the Petersen graph can only be colored by itself or by an $S_4$-graph. Then, $\mathcal{H}$ has cardinality $1$ if and only if it contains an $S_4$-graph. We believe that the following could be an interesting question: are there inclusion-wise minimal sets $\mathcal{H}$ of finite cardinality greater than $1$? Note that because such a set cannot contain an $S_4$-graph, it must necessarily contain the Petersen graph itself to ensure that the Petersen graph is colored.

On the other hand, if we restrict our scope to bridgeless graphs, let $\mathcal{H}_b$ be an inclusion-wise minimal set of connected \emph{bridgeless} cubic graphs that colors every connected bridgeless cubic graph. By Theorem~1.1 of~\cite{MMSW}, $\mathcal{H}_b$ is infinite. Furthermore, Theorem~1.2 of~\cite{MMSW} implies that $\mathcal{H}_b$ is unique, and its elements are precisely those bridgeless cubic graphs that are colorable only by themselves.

Hence, it would be interesting to investigate whether our counterexamples of order 52 are also colorable only by themselves. This question is of particular relevance, as it is naturally tied to the question whether they are the smallest bridgeless cubic graphs without a Petersen coloring.

\subsection{Further results on related problems} \label{sec:further_results}

Recall that the \emph{perfect matching index} $\pi(G)$ of a graph $G$ is the smallest number of perfect matchings needed to cover all the edges of $G$. The Berge-Fulkerson Conjecture is equivalent to the assertion that $\pi(G)\le5$ for every bridgeless cubic graph $G$ (see \cite{Maz}).

We computationally checked that the Berge-Fulkerson Conjecture holds for the two counterexamples of order 52 and the 68-vertex and two 112-vertex counterexamples. In fact, they even have $\pi \le 4$. 
In~\cite{HM} the following conjecture was made.

\begin{conjecture}[Hakobyan and Mkrtchyan~{\cite[Conjecture 3.2]{HM}}]
    Any cubic graph $G$ with $\pi(G) \le 4$ admits a $P_{12}$-coloring.
\end{conjecture}

Here, $P_{12}$ is the cubic graph obtained by replacing a vertex of the Petersen graph $P$ by a triangle.
We remark that our counterexamples provide a negative answer to this conjecture as well. Indeed, since $P_{12}$ admits a $P$-coloring, our graphs do not admit a $P_{12}$-coloring either. 

Given that such graphs exhibit such a highly specific behavior disproving the Petersen and $P_{12}$-coloring conjectures, while satisfying the Berge-Fulkerson conjecture, it is natural to investigate how they behave with respect to the Cycle Double Cover Conjecture.
Recently, OpenAI posted a preprint~\cite{cdc} in which they proved that every bridgeless graph admits a cycle double cover with eight even subgraphs. It is then natural to ask whether the proposed counterexamples for the Petersen Coloring verify the more restrictive $5$-Cycle Double Cover Conjecture. Indeed, we computationally checked that the above-mentioned two $52$-vertex, one $68$-vertex, and two $112$-vertex graphs do admit a cycle double cover with five even subgraphs.

\section*{Acknowledgements}
We would like to thank Gunnar Brinkmann and Steven Van Overberghe for providing us with the complete list of all weak snarks on 38 vertices.

Jan Goedgebeur is supported by Internal Funds of KU Leuven and a grant of the Research Foundation Flanders (FWO) with grant number G0AGX24N. Several of the computations for this work were carried out using the supercomputer infrastructure provided by the VSC (Flemish Supercomputer Center), funded by the Research Foundation Flanders (FWO) and the Flemish Government.

% \section*{Appendix: a $52$-vertex counterexample to the Petersen coloring conjecture}

% \[
% \begin{aligned}
% E(G)=\{&
% \{1,4\}, \{1,5\}, \{1,13\}, \{2,7\}, \{2,10\}, \{2,52\},\\
% &
% \{3,4\}, \{3,8\}, \{3,46\}, \{4,9\}, \{5,7\}, \{5,8\},\\
% &
% \{6,8\}, \{6,9\}, \{6,31\}, \{7,9\}, \{10,11\}, \{10,15\},\\
% &
% \{11,12\}, \{11,16\}, \{12,17\}, \{12,20\}, \{13,15\}, \{13,16\},\\
% &
% \{14,16\}, \{14,17\}, \{14,40\}, \{15,17\}, \{18,21\}, \{18,22\},\\
% &
% \{18,30\}, \{19,24\}, \{19,27\}, \{19,52\}, \{20,21\}, \{20,25\},\\
% &
% \{21,26\}, \{22,24\}, \{22,25\}, \{23,25\}, \{23,26\}, \{23,48\},\\
% &
% \{24,26\}, \{27,28\}, \{27,32\}, \{28,29\}, \{28,33\}, \{29,34\},\\
% &
% \{29,37\}, \{30,32\}, \{30,33\}, \{31,33\}, \{31,34\}, \{32,34\},\\
% &
% \{35,38\}, \{35,39\}, \{35,47\}, \{36,41\}, \{36,44\}, \{36,52\},\\
% &
% \{37,38\}, \{37,42\}, \{38,43\}, \{39,41\}, \{39,42\}, \{40,42\},\\
% &
% \{40,43\}, \{41,43\}, \{44,45\}, \{44,49\}, \{45,46\}, \{45,50\},\\
% &
% \{46,51\}, \{47,49\}, \{47,50\}, \{48,50\}, \{48,51\}, \{49,51\}
% \}.
% \end{aligned}
% \]

\end{document}